\documentclass{article}
\usepackage{amssymb}
\usepackage{amsmath}
\usepackage[normalem]{ulem}
\usepackage[utf8]{inputenc}
\usepackage{graphicx}
\usepackage{color}

\newtheorem{theorem}{Theorem}[section]

\newtheorem{proposition}[theorem]{Proposition}
\newtheorem{corollary}[theorem]{Corollary}
\newtheorem{definition}{Definition}[section]
\newtheorem{preexample}{Example}[section]
\newenvironment{example}{\begin{preexample}}{\end{preexample}}
\newtheorem{preremark}{Remark}
\newenvironment{remark}{\begin{preremark}\rm}{\end{preremark}}

\newenvironment{proof}
{{\bf Proof:}}
{\qquad \hspace*{\fill} $\Box$}%

\title{Vertical Control Systems on Tangent Bundles and Fiberwise Controllability}

\author{S. N. Stelmastchuk\\
Universidade Federal do Paran\'a, Jandaia do Sul, Brazil}

\begin{document}
	
\maketitle
	
  \begin{abstract}
    We study control systems on the tangent bundle of a smooth manifold induced by vertical lifts of vector fields. The Vertical dynamics acts exclusively along the fibers, leaving the base point unchanged and reducing the system to a linear control problem on each tangent space, for which we obtain explicit solutions and characterize reachable sets, showing that fiberwise controllability is equivalent to a rank condition on the original vector fields. We then consider lifted systems combining complete drift and vertical controls, where the base trajectory is fixed by the drift and the control acts on tangent directions. For these systems, we derive explicit solutions and a complete characterization of reachable sets via a transport operator, yielding a necessary and sufficient condition for fiberwise controllability in terms of transported vector fields, together with a Lie-algebraic sufficient criterion.
  \end{abstract}

 	{\bf AMS 2010 subject classification}: 93B05, 93C25, 34H05.\\
	{\bf Key words:} linear control system, solutions, controllability.
	
\section{Introduction}

Let $M$ be a smooth manifold and consider its tangent bundle $TM$. In this paper, we investigate control systems on $TM$ arising from vertical lift of vector fields.

Recall that a geometric control system on $M$ is given by
\begin{equation}\label{eq:controlintroduction}
  \dot{x} = X_0(x) + \sum_{i=1}^m u_i X_i(x), \qquad x \in M,
\end{equation}
where $X_0, X_1, \ldots, X_m \in \mathfrak{X}(M)$ are smooth vector fields and $u = (u_1, \ldots, u_m)$ is an admissible control. The study of such systems is classical in geometric control theory; see, for instance, 
\cite{agrachev, sontag, colonius, jurdjevic}.

A natural way to enrich the analysis of \eqref{eq:controlintroduction} is to lift it to the tangent bundle, where one can describe not only the evolution of trajectories but also the behavior of velocities along them. The purpose of such a lift is to analyze the evolution of tangent vectors along controlled trajectories.

An initial study in this direction was carried out by the author in \cite{stelmastchuk}, where control systems induced by complete lifts were investigated. In that framework, the lifted dynamics follows the evolution of trajectories on $M$, and several structural properties were derived using the vector bundle structure of $TM$.

The present paper extends this line of research by shifting the focus to vertical lifts, which lead to a fundamentally different class of control systems. In contrast to the complete lift, vertical dynamics does not affect the base point and induces a purely fiberwise evolution. The notion of vertical structures also appears naturally in the study of mechanical systems; see, for instance, \cite{lewis}, and plays a relevant role in optimal control theory.

Vertical lifts provide a canonical mechanism to encode control actions that act exclusively on tangent directions, leaving the base point unchanged. As a consequence, the resulting dynamics is purely fiberwise: trajectories remain in a fixed tangent space $T_{x_0}M$, and the system reduces to a linear control system on each fiber. This structure allows for a transparent characterization of controllability, entirely determined by the span of the original vector fields at the base point.

We formalize this setting by introducing the notion of a vertical control system and, in particular, the vertical lift of \eqref{eq:controlintroduction}, given by
\begin{equation}\label{eq:verticalintroduction}
  \dot v = X_0^v(v) + \sum_{i=1}^m u_i X_i^v(v), \qquad v \in TM.
\end{equation}
For this system, we obtain an explicit solution (Theorem~\ref{thm:explicit_vertical_solution}) and provide a complete description of its reachable set. In particular, we show that \eqref{eq:verticalintroduction} is controllable on the fiber $T_{x_0}M$ if 
\[
  \operatorname{span}\{X_1(x_0),\dots,X_m(x_0)\} = T_{x_0}M.
\]
We emphasize that controllability is inherently fiberwise: the system is never controllable on the whole tangent bundle $TM$.

Having established the vertical framework, we then consider a more general lifted system combining vertical and complete dynamics:
\begin{equation}\label{eq:dynamicalintroduction}
  \dot v(t) = Y^c(v(t)) + \sum_{i=1}^m u_i(t)\, X_i^v(v(t)), \qquad v(t) \in TM.
\end{equation}
This system admits a natural interpretation: its projection onto $M$ follows the dynamics $\dot{x}(t) = Y(x(t))$, while the vertical component allows one to modify the velocity along the trajectory. In this sense, the control acts on the tangent directions without altering the base evolution, making it possible to control the velocity profile along a prescribed path.

Although the resulting dynamics is structurally simple, the vertical framework provides a clear and explicit bridge between geometric control systems on $M$ and linear controllability properties on the fibers, allowing a precise characterization of reachable sets and their dependence on the underlying vector fields.

For system \eqref{eq:dynamicalintroduction}, we derive an explicit representation of solutions, characterize reachable sets through a transport operator, and obtain a Lie-algebraic sufficient condition for fiberwise controllability in terms of the iterated brackets $\operatorname{ad}_Y^kX_i$.

This paper is organized as follows. Section 2 introduces the geometric structure of vertical subspaces on the tangent bundle. Section 3 develops the theory of vertical control systems and establishes their main controllability properties. Finally, Section 4 is devoted to the analysis of the combined lifted system \eqref{eq:dynamicalintroduction}.

\section{Vertical subspace on the tangent bundle}

In this paper, we closely follow the classical framework introduced by Yano and Ishihara \cite{kentaro} concerning lifts of geometric objects to the tangent bundle. For the reader’s convenience, we briefly recall the main definitions and properties that will be used throughout the paper.

Let $M$ be a smooth manifold and let $\pi: TM \to M$ denote its tangent bundle with the canonical projection. Given a smooth function $f \in C^\infty(M)$, its differential $df$ naturally defines a function on $TM$, which we denote by $\mathfrak{i}(df)$. In local coordinates $(U, x)$ on $M$, with induced coordinates $(\pi^{-1}(U), x, y)$ on $TM$, this function is expressed as
\[
  \mathfrak{i}(df)(x,y) = \frac{\partial f}{\partial x_i}(x)\, y_i.
\]

The \emph{complete lift} of $f$ is the function $f^c \in C^\infty(TM)$ defined by
\[
  f^c = \mathfrak{i}(df),
\]
while the \emph{vertical lift} of $f$ is given by
\[
  f^v = f \circ \pi.
\]

Let $X \in \mathfrak{X}(M)$ be a smooth vector field. The \emph{complete lift} $X^c \in \mathfrak{X}(TM)$ is defined by the identity
\[
  X^c(f^c) = (Xf)^c, \qquad \text{for all } f \in C^\infty(M),
\]
and the \emph{vertical lift} $X^v \in \mathfrak{X}(TM)$ is characterized by
\[
  X^v(f^v) = 0, \qquad \text{for all } f \in C^\infty(M).
\]

These lifted vector fields have clear geometric interpretations. The vertical lift $X^v$ describes variations along the fibers of $TM$: for any $w \in T_xM$,
\[
  X^v(w) = \left.\frac{d}{dt}\right|_{t=0} \big(w + t X(x)\big).
\]
On the other hand, the complete lift $X^c$ encodes the differential of the flow of $X$. If $\phi_t$ denotes the flow of $X$, then
\[
  X^c(w) = \left.\frac{d}{dt}\right|_{t=0} (d\phi_t)_x(w).
\]

In local coordinates $(x_i)$ on $M$, if
\[
  X = X^i \frac{\partial}{\partial x_i},
\]
then its vertical and complete lifts are given, respectively, by
\[
  X^v = X^i \frac{\partial}{\partial y_i}, 
  \qquad 
  X^c = X^i \frac{\partial}{\partial x_i} + y_j \frac{\partial X^i}{\partial x_j} \frac{\partial}{\partial y_i}.
\]

We now recall some fundamental algebraic properties of these lifts. For all $X,Y \in \mathfrak{X}(M)$ and $\alpha,\beta \in \mathbb{R}$,
\begin{equation}\label{propertiesI}
  (\alpha X + \beta Y)^v = \alpha X^v + \beta Y^v, 
  \ \ 
  (\alpha X + \beta Y)^c = \alpha X^c + \beta Y^c,
  \ \ 
  X^c(f^v) = (Xf)^v,
\end{equation}
and
\begin{equation}\label{propertiesII}
  [X^v,Y^v] = 0, 
  \qquad 
  [X^c,Y^v] = [X,Y]^v, 
  \qquad 
  [X^c,Y^c] = [X,Y]^c.
\end{equation}

Finally, we observe that the vertical lift $X^v$ projects to the zero vector field on $M$, whereas the complete lift $X^c$ is $\pi$-related to $X$, that is,
\[
  d\pi \circ X^c = X \circ \pi.
\]

We now introduce the notion of vertical vector fields on $TM$ in an intrinsic way.

\begin{definition}
  Let $\pi: TM \to M$ be the canonical projection. For each $v_x \in TM$, the \emph{vertical subspace} at $v_x$ is defined by
  \[
    \mathcal{V}_{v_x}TM := \ker\big(d\pi_{v_x}\big) \subset T_{v_x}(TM).
  \]
  A vector $W \in T_{v_x}(TM)$ is said to be \emph{vertical} if
  \[
    W \in \mathcal{V}_{v_x}TM.
  \]
  A vector field $V \in \mathfrak{X}(TM)$ is called a \emph{vertical vector field} if
  \[
    V(v_x) \in \mathcal{V}_{v_x}TM \quad \text{for all } v_x \in TM.
  \]
\end{definition}

Geometrically, the vertical subspace $\mathcal{V}_{v_x}TM$ consists of all tangent vectors at $v_x$ that are tangent to the fiber $\pi^{-1}(x)$. In particular, the fiber $\pi^{-1}(x)$ is an embedded submanifold of $TM$, and
\[
  \mathcal{V}_{v_x}TM = T_{v_x}\big(\pi^{-1}(x)\big).
\]

This characterization allows us to interpret vertical vectors as infinitesimal variations that affect only the velocity component, while keeping the base point fixed.

By complete analogy, when considering the tangent bundle of $TM$, namely $TTM$, the notion of vertical subspace is defined with respect to the canonical projection
\[
  \pi_{TM} : TTM \to TM.
\]
For each $W \in TTM$, the vertical subspace at $W$ is given by
\[
  \mathcal{V}_W TTM := \ker\big(d\pi_{TM}(W)\big) \subset T_W(TTM).
\]
Thus, vertical vectors in $TTM$ represent infinitesimal variations that preserve the base point in $TM$, i.e., they are tangent to the fibers of the bundle $TTM \to TM$.

\begin{proposition}\label{prop:vertical_equivalence}
  Let $\pi: TM \to M$ be the canonical projection and fix $v_x \in TM$. Then
  \[
    \mathcal{V}_{v_x}TM = T_{v_x}(T_xM).
  \]
  In particular, for $W \in T_{v_x}(TM)$, the following are equivalent:
  \begin{enumerate}
    \item[(i)] $W$ is tangent to the fiber $T_xM$ at $v_x$, i.e., $W \in T_{v_x}(T_xM)$;
    \item[(ii)] $d\pi_{v_x}(W) = 0$.
  \end{enumerate}
\end{proposition}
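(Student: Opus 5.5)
The plan is to show the two inclusions $T_{v_x}(T_xM)\subset \ker(d\pi_{v_x})$ and then to conclude equality by a dimension count. The equivalence of (i) and (ii) is just this equality read elementwise.

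First, the fiber $T_xM=\pi^{-1}(x)$ is an embedded submanifold of $TM$. In induced coordinates $(\pi^{-1}(U),x,y)$ it is the slice $\{x=x(x_0)\}$, i.e.\ the set where the first $n$ coordinates are constant. So it is an $n$-dimensional embedded submanifold, and $T_{v_x}(T_xM)$ is an $n$-dimensional subspace of $T_{v_x}(TM)$. It is spanned by $\partial/\partial y_1,\dots,\partial/\partial y_n$ at $v_x$.

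Inclusion $T_{v_x}(T_xM)\subset \ker d\pi_{v_x}$: take $W\in T_{v_x}(T_xM)$ and represent it by a curve $c(t)$ in $T_xM$ with $c(0)=v_x$. Then $\pi\circ c\equiv x$ is constant, so
\[
  d\pi_{v_x}(W)=\left.\frac{d}{dt}\right|_{t=0}\pi(c(t))=0 .
\]
Alternatively, this can be checked in coordinates, where $\pi(x,y)=x$ gives $d\pi(\partial/\partial y_i)=0$.

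Dimension count: $\pi$ is a submersion, since in coordinates it is the projection $(x,y)\mapsto x$. Hence $d\pi_{v_x}:T_{v_x}(TM)\to T_xM$ is surjective, with $\dim T_{v_x}(TM)=2n$ and $\dim T_xM=n$. By rank–nullity, $\dim\ker d\pi_{v_x}=n$. Together with the inclusion above and the fact that $\dim T_{v_x}(T_xM)=n$, this gives the equality $\mathcal{V}_{v_x}TM=T_{v_x}(T_xM)$, and (i)$\Leftrightarrow$(ii) follows immediately.

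There is no real obstacle here. The only point requiring care is justifying that $T_xM$ is an embedded submanifold and computing its dimension. I would handle this with the slice coordinates described above, or equivalently by the regular level set theorem applied to the submersion $\pi$ at the value $x$. The regular value theorem also gives the equality $T_{v_x}(\pi^{-1}(x))=\ker d\pi_{v_x}$ directly, as a one-line alternative proof.
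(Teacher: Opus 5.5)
Your proof is correct. For the inclusion $T_{v_x}(T_xM)\subset\ker d\pi_{v_x}$ you use the same curve argument as the paper; the two proofs differ only in the reverse inclusion.

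The paper takes $W\in\ker d\pi_{v_x}$ and represents it by a curve $\Gamma(t)=(x^i(t),y^i(t))$ in $TM$. It notes that $x^{i\,\prime}(0)=0$ and concludes directly that $W$ has only fiber components, hence is tangent to $T_xM$. You construct nothing for this direction. Instead you close the argument by rank--nullity: $\pi$ is a submersion, so $\dim\ker d\pi_{v_x}=n$, and the fiber is an $n$-dimensional embedded slice.

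The paper's route is more hands-on, but its last step (``its velocity is entirely determined by the fiber coordinates'', hence tangent to the fiber) silently uses that $T_{v_x}(T_xM)=\operatorname{span}\{\partial/\partial y^i|_{v_x}\}$. That is exactly the slice-chart fact you state explicitly, so your version makes the dependence visible. It is also the special case of $T_p(F^{-1}(q))=\ker dF_p$ for a submersion $F$, so it carries over verbatim to fibers of any submersion.
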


\begin{proof}
  We first show that $T_{v_x}(T_xM) \subset \ker(d\pi_{v_x})$. Let $W \in T_{v_x}(T_xM)$. Then there exists a smooth curve $\gamma:(-\varepsilon,\varepsilon)\to T_xM$ such that $\gamma(0)=v_x$ and
  \[
    W = \frac{d}{dt}\Big|_{t=0} \gamma(t).
  \]
  Since $\pi(\gamma(t)) = x$ for all $t$, it follows that
  \[
    d\pi_{v_x}(W) = \frac{d}{dt}\Big|_{t=0} \pi(\gamma(t)) = 0,
  \]
  and therefore $W \in \ker(d\pi_{v_x})$.

  Conversely, let $W \in \ker(d\pi_{v_x})$ and let $\Gamma:(-\varepsilon,\varepsilon)\to TM$ be a smooth curve such that $\Gamma(0)=v_x$ and
  \[
    W = \frac{d}{dt}\Big|_{t=0} \Gamma(t).
  \]
  Define $\alpha(t):=\pi(\Gamma(t))$. Then
  \[
    \alpha'(0) = d\pi_{v_x}(W) = 0.
  \]
  In local coordinates $(x^i,y^i)$ on $TM$, write $\Gamma(t)=(x^i(t),y^i(t))$. The condition $\alpha'(0)=0$ implies that $x^{i\,\prime}(0)=0$ for all $i$. Hence, at $t=0$, the curve $\Gamma$ has no variation in the base variables, and its velocity is entirely determined by the fiber coordinates. This shows that $W$ is tangent to the fiber $T_xM$ at $v_x$, i.e., $W \in T_{v_x}(T_xM)$.

  Therefore, $\ker(d\pi_{v_x}) = T_{v_x}(T_xM)$, and the equivalence follows.
\end{proof}

As a direct consequence of Proposition \ref{prop:vertical_equivalence}, the vertical lift $X^v$ of any vector field $X \in \mathfrak{X}(M)$ is a vertical vector field on $TM$, since
\[
  X^v(v_x) \in \ker(d\pi_{v_x}) \quad \text{for all } v_x \in TM.
\]

\begin{example}
  Let $M = S^2$ and consider local spherical coordinates $(\theta,\varphi)$, with induced coordinates $(\theta,\varphi,y^\theta,y^\varphi)$ on $TS^2$. 

  Let $X = \frac{\partial}{\partial \varphi}$, the vector field generating rotations around the vertical axis. Its vertical lift is given by
  \[
    X^v = \frac{\partial}{\partial y^\varphi}.
  \]

  The corresponding flow acts only along the fiber:
  \[
    (\theta,\varphi,y^\theta,y^\varphi) \longmapsto (\theta,\varphi,y^\theta,y^\varphi + t),
  \]
  and therefore
  \[
    d\pi(X^v)=0, \qquad \pi \circ \Phi_t^{X^v} = \pi,
  \]
  where $\Phi_t^{X^v}$ is the flow of $X^v$. This shows that $X^v$ is a vertical vector field.
\end{example}

\begin{example}\label{ex:accel_s2}
  Let $M=S^2$ and consider local coordinates $(\theta,\varphi)$ with induced coordinates $(\theta,\varphi,y^\theta,y^\varphi)$ on $TS^2$. Define the vector field
  \[
    A = -\lambda\,y^\theta\,\frac{\partial}{\partial y^\theta} - \lambda\,y^\varphi\,\frac{\partial}{\partial y^\varphi}, \qquad \lambda>0.
  \]
  Since $d\pi\!\left(\frac{\partial}{\partial y^\theta}\right)=d\pi\!\left(\frac{\partial}{\partial y^\varphi}\right)=0$, it follows that $d\pi(A)=0$, and hence $A$ is vertical.

  The associated flow keeps the base point fixed and satisfies
  \[
    y^\theta(t)=e^{-\lambda t}y^\theta(0), \qquad y^\varphi(t)=e^{-\lambda t}y^\varphi(0),
  \]
  describing an isotropic exponential damping along the fibers.
\end{example}

\begin{remark}\label{rem:second_order}
  A second-order differential equation on $M$ can be written on $TM$ as
  \[
    \dot x^i = y^i, \qquad \dot y^i = a^i(x,y),
  \]
  which defines the vector field
  \[
    \mathcal{S} = y^i\frac{\partial}{\partial x^i} + a^i(x,y)\frac{\partial}{\partial y^i}.
  \]
  The term
  \[
    A = a^i(x,y)\frac{\partial}{\partial y^i}
  \]
  is a vertical vector field, since $d\pi(A)=0$.

  Thus, acceleration laws naturally induce vertical vector fields on $TM$. In particular, vertical lifts $X^v$ correspond to accelerations depending only on the base point, while general $a^i(x,y)$ allow velocity-dependent effects.
\end{remark}

\section{Vertical control systems}

We now introduce control systems on the tangent bundle whose dynamics are entirely constrained to the fibers.

\begin{definition}\label{def:vertical_control_system}
  Let $M$ be a smooth manifold and let $\pi: TM \to M$ be the canonical projection. A control system on $TM$ of the form
  \[
    \dot v = F(v,u), \qquad v \in TM, \quad u \in U \subset \mathbb{R}^m,
  \]
  is called a \emph{vertical control system} if
  \[
    F(v,u) \in \ker(d\pi_v) \quad \text{for all } v \in TM \text{ and } u \in U.
  \]
\end{definition}

In other words, a vertical control system generates trajectories that remain within each fiber, leaving the base point unchanged.

\begin{proposition}\label{prop:local_form_vertical}
  In induced coordinates $(x^i,y^i)$ on $TM$, a control system is vertical if and only if it can be written as
  \[
    \dot x^i = 0, \qquad \dot y^i = f^i(x,y,u), \quad i=1,\dots,n.
  \]
\end{proposition}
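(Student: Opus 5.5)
The plan is to compute $d\pi$ in induced coordinates and read off the verticality condition componentwise. On $\pi^{-1}(U)$ with coordinates $(x^i,y^i)$, the projection is simply $\pi(x,y)=x$. Hence for any tangent vector
\[
  W = a^i\frac{\partial}{\partial x^i}\Big|_{v} + b^i\frac{\partial}{\partial y^i}\Big|_{v} \in T_v(TM)
\]
we have
\[
  d\pi_v(W) = a^i\frac{\partial}{\partial x^i}\Big|_{\pi(v)},
\]
because $d\pi_v(\partial/\partial x^i)=\partial/\partial x^i$ and $d\pi_v(\partial/\partial y^i)=0$. This is the same computation already used in the examples and in Remark~\ref{rem:second_order}. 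It follows that $\ker(d\pi_v)=\operatorname{span}\{\partial/\partial y^i|_v\}$, which is consistent with Proposition~\ref{prop:vertical_equivalence}.

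Next, write the control system $\dot v=F(v,u)$ locally. Its components are
\[
  F(v,u)=F^i_x(x,y,u)\,\partial/\partial x^i + F^i_y(x,y,u)\,\partial/\partial y^i,
\]
so the system reads $\dot x^i=F^i_x(x,y,u)$ and $\dot y^i=F^i_y(x,y,u)$.

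For the forward direction, assume the system is vertical in the sense of Definition~\ref{def:vertical_control_system}. The kernel description above then forces $F^i_x(x,y,u)=0$ for all $(x,y)$, all $u\in U$ and all $i$. Setting $f^i:=F^i_y$ gives the stated local form.

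For the converse, suppose the system has the form $\dot x^i=0$, $\dot y^i=f^i(x,y,u)$. Then $F(v,u)=f^i\,\partial/\partial y^i$, and the formula for $d\pi_v$ gives $d\pi_v(F(v,u))=0$. Since induced charts cover $TM$ and verticality is a pointwise condition, the system is vertical.

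There is no real obstacle here. The only point needing care is to state that the equivalence holds chart by chart, with the form required in every induced chart. This is well defined because induced coordinate changes have the form $\tilde x=\tilde x(x)$ and $\tilde y^j=(\partial\tilde x^j/\partial x^i)\,y^i$. Under such changes $\partial/\partial y^i$ transforms into a combination of the $\partial/\partial\tilde y^j$ alone, so the form $\dot x=0$ is preserved from one chart to another.
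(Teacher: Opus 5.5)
Your proposal is correct and follows the paper's argument. You compute $d\pi_v$ in induced coordinates, observe that its kernel is spanned by the $\partial/\partial y^i$, and read off that verticality of $F$ means exactly that its $\partial/\partial x^i$-components vanish. Your extra check that induced coordinate changes preserve the form $\dot x^i=0$ is a harmless addition that the paper omits; verticality is intrinsically defined, so it is not strictly needed.
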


\begin{proof}
  In induced coordinates, the projection is given by $\pi(x,y)=x$, and its differential satisfies
  \[
    d\pi_{(x,y)}\!\left(\sum a^i \frac{\partial}{\partial x^i} + \sum b^i \frac{\partial}{\partial y^i}\right)
    =
    \sum a^i \frac{\partial}{\partial x^i}.
  \]
  Thus, $F(v,u) \in \ker(d\pi_v)$ if and only if its components in the $\partial/\partial x^i$ directions vanish, which is equivalent to $\dot x^i = 0$. The remaining components define $\dot y^i = f^i(x,y,u)$.
\end{proof}

We now present an example of a vertical control system on \(TS^2\).

\begin{example}
  Let $M=S^2$ and consider local spherical coordinates $(\theta,\varphi)$, with induced coordinates $(\theta,\varphi,y^\theta,y^\varphi)$ on $TS^2$. Define the control system
  \[
    \dot\theta = 0,\qquad \dot\varphi = 0,
  \]
  \[
    \dot y^\theta = -y^\theta + u_1,\qquad
    \dot y^\varphi = -\sin(\theta)\,y^\varphi + u_2,
  \]
  where $u=(u_1,u_2)\in\mathbb{R}^2$. Since the system has no components in the base directions, it is vertical. Equivalently,
  \[
    F(\theta,\varphi,y^\theta,y^\varphi,u)
    = (-y^\theta+u_1)\frac{\partial}{\partial y^\theta} + (-\sin(\theta)\,y^\varphi+u_2)\frac{\partial}{\partial y^\varphi},
  \]
  and therefore $d\pi(F)=0$.
\end{example}

A particularly important class of vertical control systems is given by affine vertical control systems.

\begin{definition}
  A vertical lift control system is said to be \emph{affine} if it has the form
  \begin{equation}\label{eq:vertival affine system}
    \dot v = X_0^v(v) + \sum_{i=1}^m u_i X_i^v(v),
  \end{equation}
  where $X_0,\dots,X_m \in \mathfrak{X}(M)$.
\end{definition}

This class of systems is particularly relevant, as it describes control actions acting exclusively on the fiber variables through vertical lifts.

\begin{proposition}
  In induced coordinates $(x^i,y^i)$ on $TM$, the vertical affine control system is given by
  \[
    \dot x^j = 0, \qquad 
    \dot y^j = X_0^j(x) + \sum_{i=1}^m u_i X_i^j(x), \quad j=1,\dots,n.
  \]
\end{proposition}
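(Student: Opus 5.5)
The plan is to substitute the local coordinate expression of the vertical lift, recalled in Section 2, into the defining equation \eqref{eq:vertival affine system}, and then read off the components. Fix a chart $(U,x)$ on $M$ with induced coordinates $(\pi^{-1}(U),x,y)$ on $TM$, and write each vector field as $X_i = X_i^j \frac{\partial}{\partial x_j}$ for $i=0,\dots,m$. The local formula for the vertical lift gives
\[
  X_i^v = X_i^j(x)\,\frac{\partial}{\partial y_j},
\]
where the coefficients depend only on the base coordinates $x$, since $X_i^j$ is a function on $M$ composed with $\pi$.

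Next, I will use the linearity of the vertical lift from \eqref{propertiesI}, pointwise in $u$, to write the right-hand side as one vector field:
\[
  X_0^v + \sum_{i=1}^m u_i X_i^v = \Big(X_0 + \sum_{i=1}^m u_i X_i\Big)^v
  = \Big(X_0^j(x) + \sum_{i=1}^m u_i X_i^j(x)\Big)\frac{\partial}{\partial y_j}.
\]
A curve $v(t) = (x(t),y(t))$ in $\pi^{-1}(U)$ has velocity
\[
  \dot x^j\,\frac{\partial}{\partial x_j} + \dot y^j\,\frac{\partial}{\partial y_j}.
\]
The coordinate fields $\{\partial/\partial x_j,\partial/\partial y_j\}$ form a frame, so I can compare coefficients. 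The $\partial/\partial x_j$ components give $\dot x^j = 0$, and the $\partial/\partial y_j$ components give the stated equation for $\dot y^j$.

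As a consistency check, this matches Proposition~\ref{prop:local_form_vertical} with $f^j(x,y,u) = X_0^j(x) + \sum_i u_i X_i^j(x)$. In particular the system is vertical, which is also clear because each $X_i^v$ lies in $\ker d\pi$.

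There is no real obstacle here. The only point needing care is justifying the coordinate formula $X^v = X^i\,\partial/\partial y_i$ from the intrinsic description $X^v(w) = \frac{d}{dt}\big|_{t=0}(w + tX(x))$, if one does not simply take it as recalled. In induced coordinates the curve $t\mapsto w + tX(x)$ is $(x, y + tX(x))$, and its derivative at $t=0$ is $X^i(x)\,\partial/\partial y_i$, which settles that point.
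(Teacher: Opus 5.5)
Your proof is correct and is essentially the paper's argument. The paper cites Proposition~\ref{prop:local_form_vertical} (verticality forces $\dot x^j=0$), while you read off the components directly from the coordinate formula $X_i^v = X_i^j\,\partial/\partial y_j$, which also supplies the identification $f^j(x,y,u)=X_0^j(x)+\sum_i u_iX_i^j(x)$ that the paper's one-line citation leaves implicit.
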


\begin{proof}
  It is a direct application of Proposition \ref{prop:local_form_vertical}.
\end{proof}

We now present an example of an affine vertical control system on $TS^2$.
\begin{example}
  Let
  \[
    X_1=\frac{\partial}{\partial \theta},\qquad
    X_2=\frac{\partial}{\partial \varphi},\qquad
    X_0=\cos\varphi\,\frac{\partial}{\partial \theta}+\sin\theta\,\frac{\partial}{\partial \varphi}.
  \]
  Their vertical lifts are
  \[
    X_1^v=\frac{\partial}{\partial y^\theta},\qquad
    X_2^v=\frac{\partial}{\partial y^\varphi},\qquad
    X_0^v=\cos\varphi\,\frac{\partial}{\partial y^\theta}
    +\sin\theta\,\frac{\partial}{\partial y^\varphi}.
  \]
  Hence the affine vertical control system
  \[
    \dot v = X_0^v(v)+u_1X_1^v(v)+u_2X_2^v(v)
  \]
  takes the local form
  \[
    \dot\theta=0,\qquad \dot\varphi=0,
  \]
  \[
    \dot y^\theta=\cos\varphi+u_1,\qquad
    \dot y^\varphi=\sin\theta+u_2.
  \]
  Thus the dynamics acts only along the fibers of $TS^2\to S^2$.
\end{example}

We now describe some fundamental properties of the vertical control flow $\phi^v_t$.

\begin{proposition}\label{projectvertical}
  Let $\phi^v_t$ be the solution of a vertical affine control system associated with a control system $\phi_t$ on $M$. Then the following statements hold:
  \begin{enumerate}
    \item $\dot{\phi}^v_t$ is a vertical vector field;
    \item the trajectory $\phi^v_t$ projects onto a constant point $x_0 = \pi(\phi^v_0)$.
  \end{enumerate}
\end{proposition}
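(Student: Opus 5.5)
The plan is to read the statement as follows. Here $\phi^v_t$ is the trajectory $t\mapsto \phi^v_t(v_0)$, $v_0\in T_{x_0}M$, of the affine system \eqref{eq:vertival affine system} for an admissible control $u$. It is an absolutely continuous curve satisfying
\[
\dot\phi^v_t = X_0^v(\phi^v_t)+\sum_{i=1}^m u_i(t)X_i^v(\phi^v_t)
\]
for almost every $t$. Item 1 then means that this velocity is a vertical vector in $T_{\phi^v_t}(TM)$, i.e.\ it lies in $\ker(d\pi_{\phi^v_t})$.

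For item 1, I will use linearity of $d\pi_{v}$ on each tangent space $T_v(TM)$. Each $X_j^v$ is a vertical vector field; this was observed right after Proposition~\ref{prop:vertical_equivalence}, since $X^v(f^v)=0$ for all $f$, or directly from the local form $X^v=X^i\partial/\partial y_i$. Hence $d\pi_v(X_j^v(v))=0$ for every $v$ and every $j$. Applying $d\pi$ to the right-hand side gives
\[
d\pi\bigl(\dot\phi^v_t\bigr)=d\pi\bigl(X_0^v\bigr)+\sum_i u_i(t)\,d\pi\bigl(X_i^v\bigr)=0 .
\]
So the system is a vertical control system in the sense of Definition~\ref{def:vertical_control_system}, and item 1 follows.

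For item 2, set $\alpha(t)=\pi(\phi^v_t)$. By the chain rule, valid almost everywhere for absolutely continuous curves, $\dot\alpha(t)=d\pi(\dot\phi^v_t)=0$. Hence $\alpha$ is an absolutely continuous curve in $M$ with zero derivative almost everywhere, so it is constant, and $\alpha(t)=\alpha(0)=\pi(\phi^v_0)=x_0$.

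As a cross-check I can use induced coordinates, via the coordinate description of the vertical affine system given after Proposition~\ref{prop:local_form_vertical}: $\dot x^j=0$ forces $x^j(t)=x^j(0)$ within a chart. A connectedness argument on the time interval then globalizes this: the set of times where $\pi(\phi^v_t)=x_0$ is closed, and it is open by the chart computation. This chart-patching is the only mildly delicate point. The intrinsic argument via $\dot\alpha=0$ avoids it, provided I note that a curve in a manifold with vanishing derivative is locally constant and hence constant on an interval.
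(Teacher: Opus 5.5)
Your proposal is correct and follows essentially the same argument as the paper. You apply $d\pi$ to $X_0^v+\sum_i u_i(t)X_i^v$, use verticality of each $X_j^v$ and linearity to get $d\pi(\dot\phi^v_t)=0$, and conclude that $\pi(\phi^v_t)$ has zero derivative and is therefore constant; your extra care about absolute continuity (the a.e.\ chain rule, and constancy of an absolutely continuous curve with a.e.\ vanishing derivative) is a useful refinement that the paper's proof glosses over.
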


\begin{proof}
  Let $\phi^v_t$ be a solution of the vertical control system. We show that
  \[
    d\pi\big(\dot{\phi}^v_t\big)=0.
  \]
  Indeed,
  \[
    d\pi\big(\dot{\phi}^v_t\big)  = d\pi\!\left(X_0^v(\phi^v_t)+  \sum_{i=1}^{m} u_i(t)\, X_i^v(\phi^v_t) \right).
  \]
  Since each $X_i^v$ is a vertical vector field, we have
  \[
    d\pi\big(X_i^v(\phi^v_t)\big)=0,\quad i=0,1,\dots,m,
  \]
  and therefore
  \[
    d\pi\big(\dot{\phi}^v_t\big)=0.
  \]
  This proves that $\dot{\phi}^v_t$ is vertical.

  Moreover,
  \[
    \frac{d}{dt}\big(\pi(\phi^v_t)\big) = d\pi\big(\dot{\phi}^v_t\big)=0,
  \]
  hence $\pi(\phi^v_t)$ is constant. Taking $x_0=\pi(\phi^v_0)$, we conclude that
  \[
    \pi(\phi^v_t)=x_0 \quad \text{for all } t\geq 0.
  \]
\end{proof}

An explicit representation of solutions of affine vertical systems can be obtained by means of chronological calculus (see \cite[ch.~2]{agrachev}).

\begin{theorem}\label{thm:explicit_vertical_solution}
  Let $v_x \in T_xM$ and denote by $\phi^v(t,v_x,u)$ the corresponding trajectory of the affine vertical control system (\ref{eq:vertival affine system}). Then, for every $t\geq 0$, the solution admits the representation
  \begin{equation}\label{eq:chronological_representation}
    \phi^v(t,v_x,u) = \varphi_t^{v,0} \left(  \overrightarrow{\exp} \left(  \int_0^t \sum_{i=1}^m u_i(\tau)\, X_i^v\, d\tau \right)(v_x)\right),
  \end{equation}
  where $\varphi_t^{v,0}$ is the flow of $X_0^v$.
  Moreover, since the vertical lifts commute and the trajectory remains in the fiber $T_xM$, the solution can be written explicitly as
  \begin{equation}\label{eq:fiber_solution}
    v_x(t) = v_x(0) + t\,X_0(x) + \sum_{i=1}^m \left(\int_0^t u_i(\tau)\,d\tau\right) X_i(x) \;\in\; T_xM.
  \end{equation}
\end{theorem}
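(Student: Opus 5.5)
The plan is to verify the fiber formula \eqref{eq:fiber_solution} directly and then reconcile it with the chronological representation \eqref{eq:chronological_representation}. By Proposition~\ref{projectvertical}, any trajectory starting at $v_x$ satisfies $\pi(\phi^v_t)=x$ for all $t$, so it stays in $T_xM$. On this fiber I use induced coordinates $(x^i,y^i)$ with $x$ frozen. By the coordinate form of the affine vertical system, the equations become
\[
  \dot y^j = X_0^j(x) + \sum_{i=1}^m u_i(t)\,X_i^j(x).
\]
The right-hand side does not depend on $y$. Integrating from $0$ to $t$ therefore gives $y^j(t)=y^j(0)+tX_0^j(x)+\sum_i \big(\int_0^t u_i\big)X_i^j(x)$, which is \eqref{eq:fiber_solution} read in the linear structure of $T_xM$. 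Uniqueness of solutions of ODEs with locally integrable controls (Carathéodory) shows that this is \emph{the} trajectory. The formula is intrinsic, since $T_xM$ is a vector space and each $X_i(x)\in T_xM$, so it does not depend on the chart.

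For \eqref{eq:chronological_representation}, I will first compute the flows involved. The flow of a vertical lift $Y^v$ is $w\mapsto w+sY(\pi(w))$, because $Y^v(w)=\frac{d}{ds}|_{0}(w+sY(x))$ and the base point is fixed. Hence $\varphi^{v,0}_t(w)=w+tX_0(\pi(w))$. For the time-dependent field $V_\tau=\sum_i u_i(\tau)X_i^v$, the right chronological exponential is by definition the flow of the nonautonomous ODE $\dot w=V_\tau(w)$. By the computation above with $X_0=0$, this flow sends $v_x$ to $v_x+\sum_i(\int_0^t u_i)X_i(x)$. Applying $\varphi^{v,0}_t$ adds $tX_0(x)$, and the base point is still $x$, so both representations agree.

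I still need to justify why the composition ``drift flow after control flow'' equals the full flow. By the variation formula of chronological calculus (see \cite[ch.~2]{agrachev}), the full flow equals $\overrightarrow{\exp}\int_0^t (\varphi^{v,0}_{\tau})^{-1}_*V_\tau\,d\tau$ composed with $\varphi^{v,0}_t$. The pulled-back field is $\mathrm{Ad}$ of $V_\tau$ under the drift flow, which is $e^{-\tau\,\mathrm{ad}X_0^v}V_\tau$. Since $[X_0^v,X_i^v]=0$ by \eqref{propertiesII}, this field equals $V_\tau$, which gives exactly \eqref{eq:chronological_representation}.

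The main obstacle is bookkeeping the order and sign conventions of the chronological calculus (right versus left exponentials, and composition written as operators). I will sidestep it by relying on the explicit additive flows. Because all flows are translations along the fiber, they commute, so the order of composition is irrelevant and any convention yields the same result.
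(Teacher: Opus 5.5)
Your argument is correct, but it runs in the opposite direction from the paper's.

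The paper starts from the chronological exponential of $X_0^v+\sum_i u_iX_i^v$ and uses the variation formula to split off $\overrightarrow{\exp}\int_0^t X_0^v\,d\tau$. It then removes $e^{\tau\operatorname{ad}X_0^v}$ via $[X_0^v,X_i^v]=0$, and replaces the remaining chronological exponential by an ordinary exponential because the $X_i^v$ commute. Only at the end does it evaluate the translation flows $w\mapsto w+sX_i(x)$ to arrive at \eqref{eq:fiber_solution}.

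You instead get \eqref{eq:fiber_solution} first by elementary ODE reasoning: the base point is frozen, the fiber equation has a $y$-independent right-hand side, and you integrate directly and invoke uniqueness. You then obtain \eqref{eq:chronological_representation} just by evaluating its right-hand side. The control flow, computed directly as the flow of the nonautonomous vertical field, translates by $\sum_i\bigl(\int_0^t u_i\bigr)X_i(x)$, and the drift flow translates by $tX_0(x)$. Once both sides are computed explicitly, your paragraph on the variation formula is only a consistency check, not a needed step. So your proof never relies on the commuting-fields reduction of $\overrightarrow{\exp}$ or on any order or sign convention, and it gives global existence and uniqueness for free.

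What the paper's route buys is structural insight and reusability. It isolates the vanishing of $\operatorname{ad}X_0^v$ on the control fields as the reason the drift factors out. It is also exactly the template used in Theorem~\ref{thm:solution_system}, where $[Y^c,X_i^v]=[Y,X_i]^v$ need not vanish and the fiber equation is no longer $y$-independent. There your direct integration would have to be replaced by a variation-of-constants argument along the moving base trajectory.
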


\begin{proof}
  Using chronological calculus (see \cite{agrachev}), the solution of the system can be written as
  \[
    \phi^v(t,v_x,u) = v_x \circ \overrightarrow{\exp} \int_0^t \left( X_0^v + \sum_{i=1}^m u_i(\tau)\, X_i^v \right)d\tau.
  \]
  Applying the variation formula yields the decomposition
  \[
    \phi^v(t,v_x,u) = v_x \circ \overrightarrow{\exp} \left(\int_0^t\left(\overrightarrow{\exp} \int_0^\tau \operatorname{ad}(X_0^v)\, d\theta \right)
    \sum_{i=1}^m u_i(\tau)\, X_i^v\, d\tau\right)
    \circ \overrightarrow{\exp} \int_0^t X_0^v\, d\tau.
  \]
  Since $X_0^v$ is autonomous, we have
  \[
    \overrightarrow{\exp}\int_0^\tau \operatorname{ad}(X_0^v)\, d\theta = e^{\tau\,\operatorname{ad}(X_0^v)}.
  \]
  Moreover, using $[X^v,Y^v]=0$ for all vector fields $X,Y$, it follows that
  \[
    \operatorname{ad}(X_0^v)X_i^v = 0, \qquad e^{\tau\,\operatorname{ad}(X_0^v)}X_i^v = X_i^v.
  \]
  Therefore,
  \[
    \phi^v(t,v_x,u) = v_x \circ \overrightarrow{\exp} \left(\int_0^t \sum_{i=1}^m u_i(\tau)\, X_i^v\, d\tau\right)
    \circ \overrightarrow{\exp} \int_0^t X_0^v\, d\tau,
  \]
  which gives \eqref{eq:chronological_representation}.
  Since the fields $X_i^v$ commute, the chronological exponential reduces to the ordinary exponential, yielding
  \[
    \overrightarrow{\exp}
    \left( \int_0^t \sum_{i=1}^m u_i(\tau)\, X_i^v\, d\tau \right) = \exp \left( \sum_{i=1}^m \left(\int_0^t u_i(\tau)\, d\tau\right) X_i^v\right).
  \]

  We now compute the action of these flows on the fiber $T_xM$. For each $i$, by definition of the vertical lift,
  \[
    X_i^v(w_x) = \left.\frac{d}{ds}\right|_{s=0}\big(w_x + s\,X_i(x)\big),
  \]
  so that
  \[
    \Phi_s^{X_i^v}(w_x)=w_x+s\,X_i(x).
  \]
  Thus,
  \[
    \exp \left(\sum_{i=1}^m \left(\int_0^t u_i(\tau)\,d\tau\right) X_i^v \right)(v_x)
    = v_x + \sum_{i=1}^m \left(\int_0^t u_i(\tau)\,d\tau\right) X_i(x).
  \]

  Similarly, the flow of $X_0^v$ is given by
  \[
    \varphi_t^{v,0}(w_x)=w_x+t\,X_0(x).
  \]
  Combining the two expressions, we obtain
  \[
    \phi^v(t,v_x,u)=v_x+t\,X_0(x)+\sum_{i=1}^m \left(\int_0^t u_i(\tau)\,d\tau\right) X_i(x),
  \]
  which proves \eqref{eq:fiber_solution}.
\end{proof}

The argument above is inspired by Proposition 3.1 of \cite{agrachev}.

To study controllability, we introduce the reachable sets. For $v_x \in TM$ and $t\geq 0$, define
\[
  \mathcal{R}^v_t(v_x)  = \left\{ w \in TM \;\middle|\; \phi^v(t,v_x,u)=w \text{ for some admissible control } u  \right\},
\]
and the full reachable set
\[
  \mathcal{R}^v(v_x)  = \bigcup_{t\geq 0} \mathcal{R}^v_t(v_x).
\]

\begin{corollary}
  Let $v_x \in TM$ with $\pi(v_x)=x$. Then, for all $t\geq 0$, the reachable set $\mathcal{R}^v_t(v_x)$ is contained in the fiber $T_xM$. In particular,
  \[
    \pi\big(\mathcal{R}^v_t(v_x)\big)=\{x\},
  \]
  and the same holds for $\mathcal{R}^v(v_x)$.
\end{corollary}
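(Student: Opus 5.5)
The plan is to read the statement off from Proposition~\ref{projectvertical}, with Theorem~\ref{thm:explicit_vertical_solution} as an independent explicit check. Fix $v_x\in TM$ with $\pi(v_x)=x$, a time $t\geq 0$, and an arbitrary element $w\in\mathcal{R}^v_t(v_x)$. By definition of the reachable set, $w=\phi^v(t,v_x,u)$ for some admissible control $u$. Proposition~\ref{projectvertical}(2) says the trajectory $s\mapsto\phi^v(s,v_x,u)$ projects onto the constant point $\pi(\phi^v(0,v_x,u))=\pi(v_x)=x$. Hence $\pi(w)=x$, that is, $w\in T_xM$, which gives $\mathcal{R}^v_t(v_x)\subset T_xM$.

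Alternatively, formula \eqref{eq:fiber_solution} exhibits $w$ as
\[
  w = v_x + t\,X_0(x)+\sum_{i=1}^m\Big(\int_0^t u_i(\tau)\,d\tau\Big)X_i(x),
\]
which is a linear combination of vectors in the vector space $T_xM$, so again $w\in T_xM$. I would mention this only as a confirmation.

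For the equality $\pi(\mathcal{R}^v_t(v_x))=\{x\}$, the inclusion $\subset$ follows from the containment just proved. For $\supset$, the only thing needed is that $\mathcal{R}^v_t(v_x)$ is nonempty. This holds because the zero control (or any admissible control) is allowed, and since $X_0^v$ and the $X_i^v$ have affine, hence complete, flows on each fiber, the solution exists for all $t\geq 0$ by \eqref{eq:fiber_solution}. This completeness point is the only place requiring care: nonemptiness must be justified, otherwise the image could be empty. I would argue it directly from the explicit formula, which is defined for every $t$.

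Finally, since $\mathcal{R}^v(v_x)=\bigcup_{t\ge0}\mathcal{R}^v_t(v_x)$ is a union of subsets of $T_xM$, each with projection $\{x\}$, it is contained in $T_xM$ and $\pi(\mathcal{R}^v(v_x))=\{x\}$. This completes the argument.
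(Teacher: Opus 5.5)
Your proposal is correct and follows the paper's own reasoning, which leaves this corollary unproved as an immediate consequence of Proposition~\ref{projectvertical} and the explicit fiber formula of Theorem~\ref{thm:explicit_vertical_solution}: trajectories never leave the initial fiber $T_xM$. Your check that $\mathcal{R}^v_t(v_x)$ is nonempty, so that $\pi(\mathcal{R}^v_t(v_x))=\{x\}$ is an equality rather than just an inclusion, is a detail the paper leaves implicit, and your justification via the globally defined affine formula is sound.
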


It follows immediately that the system is never controllable on the whole tangent bundle \(TM\). Indeed, if $v_x \in T_xM$ and $w_y \in T_yM$ with $x\neq y$, there is no admissible control $u$ and no time $t$ such that
\[
  \phi^v(t,v_x,u)=w_y,
\]
since trajectories remain in the initial fiber.

However, the system may still be controllable when restricted to each fiber $T_xM$, which motivates the study of fiberwise controllability.

\begin{theorem}\label{thm:fiber_controllability_vertical}
  Consider the affine vertical control system on $TM$ given by \eqref{eq:vertival affine system}. Assume that the admissible controls belong to $L^1([0,T],\mathbb R^m)$, so that for every  $(\alpha_1,\dots,\alpha_m)\in\mathbb R^m$ there exists an admissible control  $u=(u_1,\dots,u_m)$ satisfying
  \[
    \int_0^T u_i(s)\,ds=\alpha_i,
    \qquad i=1,\dots,m.
  \]
  Fix $x_0\in M$, $v_0\in T_{x_0}M$, and $T>0$. Then the following statements hold::
  \begin{enumerate}
    \item[(a)] The reachable set at time $T$ from $v_0$ is
    \[
      R_T^v(v_0) = v_0+T X_0(x_0)+\operatorname{span}\{X_1(x_0),\dots,X_m(x_0)\}.
    \]
    \item[(b)] The system is controllable on the fiber $T_{x_0}M$ if and only if
    \[
      \operatorname{span}\{X_1(x_0),\dots,X_m(x_0)\}=T_{x_0}M.
    \]
  \end{enumerate}
\end{theorem}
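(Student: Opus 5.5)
The plan is to read everything off the explicit fiber formula \eqref{eq:fiber_solution} of Theorem~\ref{thm:explicit_vertical_solution}. For any admissible $u$, the endpoint at time $T$ is
\[
  \phi^v(T,v_0,u)=v_0+T\,X_0(x_0)+\sum_{i=1}^m \alpha_i(u)\,X_i(x_0),
  \qquad \alpha_i(u):=\int_0^T u_i(s)\,ds .
\]
This immediately gives the inclusion $R_T^v(v_0)\subset v_0+TX_0(x_0)+\operatorname{span}\{X_1(x_0),\dots,X_m(x_0)\}$. By the Corollary following Theorem~\ref{thm:explicit_vertical_solution}, everything takes place inside the fiber $T_{x_0}M$.

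For the reverse inclusion in (a), take an arbitrary element $v_0+TX_0(x_0)+\sum_i\alpha_iX_i(x_0)$ of the affine subspace. By the standing hypothesis on admissible controls, there is an admissible $u$ with $\int_0^T u_i=\alpha_i$; concretely, the constant control $u_i\equiv\alpha_i/T$ lies in $L^1$. Substituting it into the formula above hits the prescribed point. This step is purely bookkeeping. The only thing to note is that the representation need not be unique, since the $X_i(x_0)$ may be linearly dependent, but that does not affect surjectivity onto the span.

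For (b), I will take ``controllable on the fiber'' to mean that for every $v_0,w\in T_{x_0}M$ there exist $T>0$ and an admissible $u$ with $\phi^v(T,v_0,u)=w$. Write $S=\operatorname{span}\{X_1(x_0),\dots,X_m(x_0)\}$. By (a), $\mathcal{R}^v(v_0)=\bigcup_{T>0}\bigl(v_0+TX_0(x_0)+S\bigr)$.

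For sufficiency, if $S=T_{x_0}M$, then already $R_T^v(v_0)=T_{x_0}M$ for any fixed $T$.

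For necessity, suppose $S\neq T_{x_0}M$. Pass to the quotient $q:T_{x_0}M\to T_{x_0}M/S$, which is nonzero. The reachable set projects to the ray $\{q(v_0)+T\,q(X_0(x_0)) : T\ge 0\}$, which is a half-line or a single point. It is therefore a proper subset of the quotient whenever that quotient has dimension $\ge1$. Choose $w$ with $q(w)$ off this ray, for instance $q(w)=q(v_0)-q(X_0(x_0))-\xi$ with $\xi$ chosen suitably, or simply any point not on the ray. Such a $w$ is not reachable.

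The main obstacle, mild as it is, is exactly this necessity direction. One must make sure the drift $X_0(x_0)$ cannot compensate for a missing direction, and the quotient-by-$S$ argument handles that uniformly. If $q(X_0(x_0))=0$, the image of the reachable set in the quotient is a single point. If $q(X_0(x_0))\neq 0$, the image is a closed half-line, which cannot fill a vector space of dimension $\ge1$: it misses $q(v_0)-q(X_0(x_0))$ in dimension one, and misses far more in higher dimensions.
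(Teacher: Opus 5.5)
Your proof is correct, and it departs from the paper's in part (b).

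\textbf{Part (a).} This matches the paper. You read off the endpoint from the explicit fiber formula of Theorem~\ref{thm:explicit_vertical_solution} and use the hypothesis on the control class for the reverse inclusion; your constant control $u_i\equiv\alpha_i/T$ makes that step explicit.

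\textbf{Part (b).} The paper reads ``controllable on the fiber'' at the fixed time $T$, i.e.\ as $R_T^v(v_0)=T_{x_0}M$. It then concludes at once, because an affine subspace $a+S$ equals $T_{x_0}M$ if and only if $S=T_{x_0}M$.

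You allow a free final time instead. The set reachable at some time is then $v_0+\{\tau X_0(x_0):\tau\ge 0\}+S$, which is not affine. So you need the extra step ruling out that the drift supplies a missing direction. Your quotient argument does this correctly: the image in $T_{x_0}M/S$ is a point or a closed half-line, hence never the whole nonzero quotient. Only the inclusion $R^v_\tau(v_0)\subset v_0+\tau X_0(x_0)+S$ is needed for each $\tau$, and that holds for every control, so no extra hypothesis on the control class enters.

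\textbf{What each route buys.} The paper's argument is a one-liner matched to the fixed-$T$ setup of the statement. Yours costs one extra step but proves more. Fixed-time controllability implies free-time controllability, so your necessity also covers the paper's reading. Your sufficiency already gives $R_T^v(v_0)=T_{x_0}M$ for every $T$. Your version also shows that when $\dim(T_{x_0}M/S)=1$ and $X_0(x_0)\notin S$, the free-time reachable set is a closed half-space: it has nonempty interior but is still not the whole fiber.

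\textbf{A small correction.} When $q(X_0(x_0))\neq 0$, the point $q(v_0)-q(X_0(x_0))$ lies off the ray in every dimension, not only in dimension one. When $q(X_0(x_0))=0$ it coincides with the single reachable point. So the unreachable target $w$ should be chosen by cases, as your last paragraph effectively does, rather than by the vague ``$\xi$ chosen suitably.''
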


\begin{proof}
  By Theorem~\ref{thm:explicit_vertical_solution}, every trajectory starting from $v_0\in T_{x_0}M$ satisfies
  \[
    v(T)=v_0+T X_0(x_0)+\sum_{i=1}^m\left(\int_0^T u_i(s)\,ds\right)X_i(x_0).
  \]
  Hence every reachable point belongs to
  \[
    v_0+T X_0(x_0)+\operatorname{span}\{X_1(x_0),\dots,X_m(x_0)\},
  \]
  which proves one inclusion in (a).

  Conversely, let
  \[
    w=v_0+T X_0(x_0)+\sum_{i=1}^m\alpha_i X_i(x_0)
  \]
  be any point in
  \[
    v_0+T X_0(x_0)+\operatorname{span}\{X_1(x_0),\dots,X_m(x_0)\}.
  \]
    By the assumption on the admissible control class, there exists $u=(u_1,\dots,u_m)\in L^1([0,T],\mathbb R^m)$ such that
  \[
    \int_0^T u_i(s)\,ds=\alpha_i, \qquad i=1,\dots,m.
  \]
  Substituting into the explicit formula for the solution gives
  \[
    v(T)=w.
  \]
  Therefore,
  \[
    R_T^v(v_0)  = v_0+T X_0(x_0)+\operatorname{span}\{X_1(x_0),\dots,X_m(x_0)\},
  \]
  proving (a).

  Now (b) follows immediately from (a). Indeed, the reachable set fills the whole fiber $T_{x_0}M$ if and only if
  \[
    v_0+T X_0(x_0)+\operatorname{span}\{X_1(x_0),\dots,X_m(x_0)\}=T_{x_0}M.
  \]
  Since the left-hand side is an affine subspace of $T_{x_0}M$, this happens if and only if
  \[
    \operatorname{span}\{X_1(x_0),\dots,X_m(x_0)\}=T_{x_0}M.
  \]
\end{proof}

\begin{remark}
  The result shows that, although the dynamics is defined on the tangent bundle, the vertical structure reduces the controllability problem to a finite-dimensional linear control system on each fiber. This highlights a fundamental distinction between vertical and complete lifts: while complete lifts preserve the geometric complexity of the base dynamics, vertical lifts completely decouple the fibers and lead to purely algebraic controllability conditions.
\end{remark}

\section{Lifted Control Systems with Complete Drift and Vertical Controls} 

Let M be a smooth manifold and let $\pi: TM \to M$ be the canonical projection.  Let $Y, X_1, \dots, X_m \in \mathfrak{X}(M)$ be smooth vector fields, and consider the control system on $TM$
\begin{equation}\label{eq:complete_drift_vertical}
  \dot v(t) = Y^c\big(v(t)\big) + \sum_{i=1}^m u_i(t)\, X_i^v\big(v(t)\big), \qquad v(t) \in TM,
\end{equation}
where $Y^c$ denotes the complete lift of $Y$ and $X_i^v$ denotes the vertical lift of $X_i$. 
We denote by $\Phi^t$ the associated flow.

This system arises naturally from the affine dynamics on the base manifold $M$,
\[
  \dot x(t) = Y(x(t)) + \sum_{i=1}^m u_i(t)\, X_i(x(t)),
\]
when one is interested not only in the evolution of trajectories but also in the behavior of tangent directions along them. 
In particular, the system \eqref{eq:complete_drift_vertical} provides a geometric framework to describe how tangent vectors evolve along the flow of the drift field $Y$.

A key feature of this lifted system is that the base dynamics is entirely determined by $Y$. Indeed, if $v(t)$ is a solution of \eqref{eq:complete_drift_vertical} and $x(t)=\pi(v(t))$, then
\[
  \dot x(t) = Y(x(t)),
\]
so that the controls do not influence the trajectory on $M$. 

However, the system introduces an additional degree of freedom at the level of the tangent bundle. While the base trajectory is fixed, the tangent vector $v(t)\in T_{x(t)}M$ evolves according to a controlled dynamics. The complete lift $Y^c$ describes the transport of tangent vectors along the flow of $Y$, corresponding to the differential of the flow, whereas the vertical lifts $X_i^v$ act exclusively along the fibers, allowing one to modify the tangent component without altering the base point.

In this sense, the system does not control the trajectory itself, but rather the evolution of directions along it. This viewpoint provides a natural setting for the study of accessibility properties, Lie-algebraic generation, and lifted dynamics on the tangent bundle, establishing a direct link between the geometry of the base system and its induced dynamics on $TM$.

\begin{theorem}\label{thm:solution_system}
  Consider the lifted control system on $TM$ given by \eqref{eq:complete_drift_vertical}. Let $\varphi_t^Y$ denote the flow of $Y$. Then, for every admissible control $u \in L^1([0,T],\mathbb{R}^m)$, the solution at time $T$ is given by
  \begin{equation}\label{eq:endmap_formula}
    v(T) =  (d\phi_T^Y)_{x_0}(v_0) +\sum_{i=1}^m \int_0^T u_i(t)\,(\varphi_{T-t}^Y)_* X_i\big(\varphi_t^Y(x_0)\big)\,dt.
  \end{equation}
\end{theorem}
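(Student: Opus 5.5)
The plan is a variation-of-constants argument on the fibers, driven by the fact that the flow of $Y^c$ is the tangent map of the flow of $Y$. First, write $x(t)=\pi(v(t))$. Since $d\pi\circ Y^c = Y\circ\pi$ and each $X_i^v$ is vertical (Proposition~\ref{prop:vertical_equivalence}), we get $\dot x(t)=Y(x(t))$. Hence $x(t)=\varphi_t^Y(x_0)$ independently of $u$, and $v(t)\in T_{\varphi^Y_t(x_0)}M$ for all $t$; completeness of $Y$ is assumed, or at least existence of the flow on $[0,T]$.

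Next, pull the curve back to the fixed fiber $T_{x_0}M$. Define
\[
w(t) := (d\varphi^Y_{-t})_{\varphi^Y_t(x_0)}\big(v(t)\big)\in T_{x_0}M ,
\]
which is a curve in a fixed vector space. The flow of $Y^c$ is $\Psi_t = d\varphi_t^Y$, as recalled in Section~2 via $X^c(w)=\frac{d}{dt}|_{t=0}(d\phi_t)(w)$. So $w(t)=\Psi_{-t}(v(t))$, and we differentiate. The standard identity for a curve transported by the inverse of a flow gives
\[
\dot w(t) = d\Psi_{-t}\big(\dot v(t) - Y^c(v(t))\big) = \sum_i u_i(t)\, d\Psi_{-t}\big(X_i^v(v(t))\big).
\]
Now $\Psi_{-t}$ is linear on fibers, so its differential maps the vertical vector $X_i^v(v(t))$, identified with $X_i(x(t))\in T_{x(t)}M$, to the vertical vector identified with $(d\varphi^Y_{-t})X_i(x(t))$. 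This is the step needing care. To justify it, compute $\frac{d}{ds}\big|_{s=0}\Psi_{-t}(v(t)+sX_i(x(t)))$ using fiberwise linearity, or alternatively check it in coordinates using the local formula for $X^c$. This yields the linear ODE
\[
\dot w = \sum_i u_i(t)\,(d\varphi^Y_{-t})X_i(\varphi^Y_t(x_0))
\]
in $T_{x_0}M$ with $w(0)=v_0$. This equation holds for almost every $t$ since $u\in L^1$, so $w$ is absolutely continuous.

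Integrating gives $w(T)=v_0+\sum_i\int_0^T u_i(t)\,(d\varphi^Y_{-t})X_i(\varphi_t^Y(x_0))\,dt$. Applying the linear map $(d\varphi^Y_T)_{x_0}$ and commuting it with the integral by linearity, then using $d\varphi_T^Y\circ d\varphi^Y_{-t}=d\varphi^Y_{T-t}$ at $\varphi^Y_t(x_0)$, we obtain $v(T)=(d\varphi^Y_T)_{x_0}v_0+\sum_i\int_0^T u_i(t)\,(d\varphi^Y_{T-t})X_i(\varphi^Y_t(x_0))\,dt$. This is \eqref{eq:endmap_formula}, with $(\varphi^Y_{T-t})_*X_i(\varphi_t^Y(x_0))$ read as the pushforward of the vector $X_i(\varphi^Y_t(x_0))$ into $T_{\varphi^Y_T(x_0)}M$.

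The main obstacle is the differentiation step: justifying the derivative of $\Psi_{-t}(v(t))$ and the identification of $d\Psi_{-t}$ on vertical vectors with the linear map $d\varphi^Y_{-t}$. If an intrinsic argument becomes unwieldy, I would fall back on induced coordinates. There, $Y^c$ gives $\dot y = DY(x(t))\,y + \sum_i u_iX_i(x(t))$, which is a linear time-varying ODE whose fundamental matrix is $D\varphi^Y_t(x_0)$, and the classical Duhamel formula gives the result directly. The flow-based version is needed to pass beyond a single chart.
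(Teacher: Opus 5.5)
Your proposal is correct and follows essentially the paper's argument. The paper quotes the chronological-calculus variation-of-constants formula together with $e^{tY^c}=T\varphi^Y_t$ and $(T\varphi^Y_{-t})_*X_i^v=\big((\varphi^Y_{-t})_*X_i\big)^v$; you derive the same pull-back $w(t)=\Psi_{-t}(v(t))$ and the same action on vertical vectors by hand, then, as the paper does, integrate in the fixed fiber $T_{x_0}M$ and apply $(d\varphi^Y_T)_{x_0}$ with $d\varphi^Y_T\circ d\varphi^Y_{-t}=d\varphi^Y_{T-t}$ (reading $(\varphi^Y_{T-t})_*X_i(\varphi^Y_t(x_0))$ as the image of the vector $X_i(\varphi^Y_t(x_0))$ in $T_{\varphi^Y_T(x_0)}M$, which is the correct interpretation).
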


\begin{proof}
  Let $Z_t := Y^c + \sum_{i=1}^m u_i(t)\,X_i^v$ be the time-dependent vector field on $TM$. The solution of \eqref{eq:complete_drift_vertical} can be written using the chronological exponential as
  \[
    v(T) = \left(\overrightarrow{\exp}\int_0^T Z_t\,dt\right)(v_0).
  \]
  Using the variation-of-constants formula in chronological calculus, we obtain
  \[
    \overrightarrow{\exp}\int_0^T (Y^c + B_t)\,dt = e^{T Y^c} \circ \overrightarrow{\exp}\int_0^T (e^{-t Y^c})_* B_t\,dt,
  \]
  where $B_t = \sum_{i=1}^m u_i(t)\,X_i^v$. Hence,
  \[
    v(T) = e^{T Y^c} \left( \overrightarrow{\exp}\int_0^T \sum_{i=1}^m u_i(t)\,(e^{-t Y^c})_* X_i^v\,dt\right)(v_0).
  \]
  Since the flow of $Y^c$ is given by $e^{tY^c} = T\varphi_t^Y$, we have
  \[
    (e^{-t Y^c})_* X_i^v = (T\varphi_{-t}^Y)_* X_i^v = \big((\varphi_{-t}^Y)_* X_i\big)^v.
  \]
  Therefore,
  \[
    v(T) = T\varphi_T^Y \left( \overrightarrow{\exp}\int_0^T \sum_{i=1}^m u_i(t)\, \big((\varphi_{-t}^Y)_* X_i\big)^v\,dt\right)(v_0).
  \]
  Observe that the vector field inside the chronological exponential is vertical and, along the trajectory, acts along the fixed fiber $T_{x_0}M$. Since vertical vector fields of this form commute, the chronological exponential reduces to a translation:
  \[
    \left( \overrightarrow{\exp}\int_0^T \sum_{i=1}^m u_i(t)\,\big((\varphi_{-t}^Y)_* X_i\big)^v\,dt \right)(v_0)
    = v_0 + \int_0^T \sum_{i=1}^m u_i(t)\, (\varphi_{-t}^Y)_* X_i(x_0)\,dt.
  \]
  Applying $T\varphi_T^Y$ and using linearity along the fibers, we obtain
  \[
    v(T) = T\varphi_T^Y(v_0) + \int_0^T \sum_{i=1}^m u_i(t)\,T\varphi_T^Y\big((\varphi_{-t}^Y)_* X_i(x_0)\big)\,dt.
  \]
  Finally, using the identity
  \[
    T\varphi_T^Y \circ (\varphi_{-t}^Y)_* = (\varphi_{T-t}^Y)_*,
  \]
  we conclude that
  \[
    v(T) = T\varphi_T^Y(v_0) + \sum_{i=1}^m \int_0^T u_i(t)\, (\varphi_{T-t}^Y)_* X_i\big(\varphi_t^Y(x_0)\big)\,dt.
  \]
\end{proof}

\begin{example}
  Let $M=S^2$ and consider local spherical coordinates $(\theta,\varphi)$ on an open subset of $S^2$, with induced coordinates $(\theta,\varphi,y^\theta,y^\varphi)$ on $TS^2$. Take
  \[
    Y=\frac{\partial}{\partial \varphi},\qquad
    X_1=\frac{\partial}{\partial \theta},\qquad
    X_2=\frac{\partial}{\partial \varphi}.
  \]
  Since the coefficients of $Y$ are constant, its complete lift is
  \[
    Y^c=\frac{\partial}{\partial \varphi}.
  \]
  Moreover, the vertical lifts of $X_1$ and $X_2$ are
  \[
    X_1^v=\frac{\partial}{\partial y^\theta}, \qquad X_2^v=\frac{\partial}{\partial y^\varphi}.
  \]
  Therefore, the control system
  \[
    \dot v(t)=Y^c(v(t))+u_1(t)X_1^v(v(t))+u_2(t)X_2^v(v(t))
  \]
  takes the local form
  \[
    \dot\theta=0,\qquad \dot\varphi=1,
  \]
  \[
  \dot y^\theta=u_1(t),\qquad \dot y^\varphi=u_2(t).
  \]
  Hence the base trajectory is
  \[
  \theta(t)=\theta_0,\qquad \varphi(t)=\varphi_0+t,
  \]
  while the fiber variables satisfy
  \[
    y^\theta(t)=y^\theta(0)+\int_0^t u_1(s)\,ds,  \qquad  y^\varphi(t)=y^\varphi(0)+\int_0^t u_2(s)\,ds.
  \]
  Thus, the control acts only on the fiber coordinates, whereas the base point evolves according to the flow of $Y$ on $S^2$.
\end{example}

We now prove a condition for fiberwise controllability via transported vector fields.

\begin{theorem}\label{thm:fiber_transport_operator_strong}
  Consider the lifted control system \eqref{eq:complete_drift_vertical} on $TM$ with the controls belong to $L^1([0,T],\mathbb R^m)$. Fix $x_0\in M$, let $\varphi_t^Y$ be the flow of $Y$, and let $v_0\in T_{x_0}M$. For each $T>0$, define the linear operator
  \[
    L_T:L^1([0,T],\mathbb R^m)\to T_{\varphi_T^Y(x_0)}M
  \]
  by
  \[
    L_T(u):=\sum_{i=1}^m\int_0^T  u_i(t)\,(\varphi_{T-t}^Y)_*X_i\big(\varphi_t^Y(x_0)\big)\,dt.
  \]
  Then the following statements hold:
  \begin{enumerate}
    \item[(a)] The reachable set at time $T$ from $v_0$ is
    \[
      R_T(v_0)  = (d\varphi_T^Y)_{x_0}(v_0)+\operatorname{Im}(L_T).
    \]
    \item[(b)] One has
    \[
      \operatorname{Im}(L_T)  = (d\varphi_T^Y)_{x_0}  \left(  \operatorname{span} \left\{ (\varphi_{-t}^Y)_*X_i(x_0):\ t\in[0,T],\ i=1,\dots,m\right\}\right).
    \]
    \item[(c)] The system is controllable along the fiber $T_{\varphi_T^Y(x_0)}M$ at time $T$ if and only if
    \[
      \operatorname{span}
      \left\{ (\varphi_{-t}^Y)_*X_i(x_0):\ t\in[0,T],\ i=1,\dots,m  \right\}  = T_{x_0}M.
    \]
    \item[(d)] In particular, if
    \[
      \operatorname{span} \big\{  \operatorname{ad}_Y^kX_i(x_0):\ k\ge0,\ i=1,\dots,m \big\}  = T_{x_0}M,
    \]
    then the system is controllable along the fiber $T_{\varphi_T^Y(x_0)}M$ at time $T$.
  \end{enumerate}
\end{theorem}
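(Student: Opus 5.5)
The plan is to read everything off the end-point formula of Theorem~\ref{thm:solution_system}. For (a), that formula says every trajectory from $v_0$ under a control $u\in L^1([0,T],\mathbb R^m)$ ends at $(d\varphi_T^Y)_{x_0}(v_0)+L_T(u)$. Hence $R_T(v_0)$ is exactly the translate of $\operatorname{Im}(L_T)$ by $(d\varphi_T^Y)_{x_0}(v_0)$. One inclusion is immediate, and the other follows by choosing $u$ with $L_T(u)=w-(d\varphi_T^Y)_{x_0}(v_0)$ for any $w$ in the translate.

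For (b), I will rewrite the integrand using functoriality of pushforwards: $\varphi_{T-t}^Y=\varphi_T^Y\circ\varphi_{-t}^Y$, so
\[
(\varphi_{T-t}^Y)_*X_i\big(\varphi_t^Y(x_0)\big)=(d\varphi_T^Y)_{x_0}\big((\varphi_{-t}^Y)_*X_i(x_0)\big),
\]
where here $(\varphi_{-t}^Y)_*X_i(x_0)$ means $(d\varphi_{-t}^Y)X_i(\varphi_t^Y(x_0))\in T_{x_0}M$. The fixed linear isomorphism $(d\varphi_T^Y)_{x_0}$ then comes out of the integral, giving $\operatorname{Im}(L_T)=(d\varphi_T^Y)_{x_0}(\operatorname{Im}\tilde L_T)$, where $\tilde L_T(u)=\sum_i\int_0^T u_i(t)g_i(t)\,dt$ and $g_i(t):=(\varphi_{-t}^Y)_*X_i(x_0)$ is a continuous curve in the finite-dimensional space $T_{x_0}M$. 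It remains to show that $\operatorname{Im}\tilde L_T=S:=\operatorname{span}\{g_i(t)\}$. Clearly $\operatorname{Im}\tilde L_T\subset S$, since each integral of an $S$-valued function lies in the closed subspace $S$. For the reverse inclusion I will use duality, which I expect to be the main (though mild) obstacle. The image is a linear subspace, hence closed. If a covector $\xi$ annihilates it, then taking $u=\xi(g_i(\cdot))e_i\in L^1$ gives $\int_0^T\xi(g_i(t))^2\,dt=0$. Continuity then forces $\xi(g_i(t))=0$ for all $t$, so $\xi$ annihilates $S$. Therefore $S\subset\operatorname{Im}\tilde L_T$.

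Part (c) then follows from (a) and (b). The reachable set is an affine subspace of $T_{\varphi_T^Y(x_0)}M$, and it equals the whole fiber iff $\operatorname{Im}(L_T)$ is everything. Since $(d\varphi_T^Y)_{x_0}$ is an isomorphism, this happens iff $S=T_{x_0}M$.

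For (d), I will show that $\operatorname{ad}_Y^kX_i(x_0)\in S$ for all $k$, so the hypothesis forces $S=T_{x_0}M$ and (c) applies. The standard identity $\frac{d}{dt}(\varphi_{-t}^Y)_*X=(\varphi_{-t}^Y)_*[Y,X]$ gives $\frac{d^k}{dt^k}\big|_{t=0}g_i(t)=\operatorname{ad}_Y^kX_i(x_0)$, up to a sign convention for $\operatorname{ad}$ which does not affect spans. These derivatives are limits of difference quotients of elements of $S$, and $S$ is closed in finite dimensions, so they lie in $S$. Note that no analyticity is needed here, since only the inclusion of the bracket span into $S$ is used, not equality.
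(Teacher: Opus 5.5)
Your proof is correct. For (a), (c) and the inclusion $\operatorname{Im}(L_T)\subset(d\varphi_T^Y)_{x_0}(S)$ in (b) it is the paper's argument: you factor the fixed isomorphism $(d\varphi_T^Y)_{x_0}$ out of the integral.

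The real difference is the reverse inclusion in (b).
\begin{itemize}
\item The paper realizes each generator $g_j(t_0)$ approximately by concentrated pulses $u^\varepsilon=\varepsilon^{-1}\mathbf{1}_{[t_0,t_0+\varepsilon]}e_j$. It shows $L_T(u^\varepsilon)\to(d\varphi_T^Y)_{x_0}g_j(t_0)$ by continuity of $g_j$, and concludes from closedness of the finite-dimensional image.
\item You use a Gramian-type duality test with $u=\xi(g_i(\cdot))\,e_i$, so that $\int_0^T\xi(g_i(t))^2\,dt=0$ forces $\xi\perp S$.
\end{itemize}

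Your route has three advantages. It needs no approximation step. It treats the endpoint $t_0=T$ like any other time, whereas the paper's pulse on $[t_0,t_0+\varepsilon]\subset[0,T]$ needs a trivial modification there. And the same computation shows that $W\xi=\sum_i\int_0^T\xi(g_i(t))\,g_i(t)\,dt$ has image exactly $S$. So the smooth controls $u_i=\xi(g_i(\cdot))$, $\xi\in T_{x_0}^*M$, already attain all of $S$; this is the classical explicit Gramian steering law. The paper's pulse argument, in exchange, uses only piecewise-constant controls. It also shows concretely how each individual transported direction is approximately produced.

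In (d), your direct inclusion via difference quotients in the closed subspace $S$ is the primal form of the paper's argument. The paper argues by contradiction with an annihilating covector $\lambda$ and $f_i(t)=\lambda(g_i(t))\equiv 0$; the two are equivalent.

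Incidentally, your sign $\frac{d}{dt}(\varphi_{-t}^Y)_*X=(\varphi_{-t}^Y)_*[Y,X]$ is the right one, while the paper writes a minus sign. The paper's own example confirms this: for $Y=x\,\partial_y$ and $X_1=\partial_x$ one has $g_1(t)=\partial_x-t\,\partial_y$, whose derivative is $-\partial_y=[Y,X_1]$. As you note, the sign does not affect spans.
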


\begin{proof}
  We divide the proof into several steps.

  \medskip
  \noindent
  \textbf{Step 1:}
  By the representation formula established for the lifted system,  for every admissible control $u\in L^1([0,T],\mathbb R^m)$ the corresponding solution $v_u(\cdot)$ of \eqref{eq:complete_drift_vertical} with initial condition $v_u(0)=v_0\in T_{x_0}M$ satisfies
  \begin{equation}\label{eq:explicit_solution_strong}
    v_u(T)  = (d\varphi_T^Y)_{x_0}(v_0) + \sum_{i=1}^m
    \int_0^T  u_i(t)\,(\varphi_{T-t}^Y)_*X_i\big(\varphi_t^Y(x_0)\big)\,dt.
  \end{equation}
  Therefore,
  \[
    v_u(T)=(d\varphi_T^Y)_{x_0}(v_0)+L_T(u).
  \]
  Since the reachable set at time $T$ is, by definition,
  \[
    R_T(v_0)=\{v_u(T):u\in L^1([0,T],\mathbb R^m)\},
  \]
  it follows immediately that
  \[
    R_T(v_0)  = (d\varphi_T^Y)_{x_0}(v_0)+\operatorname{Im}(L_T).
  \]
  This proves (a).

  \medskip
  \noindent
  \textbf{Step 2:}
  For each $t\in[0,T]$ and $i\in\{1,\dots,m\}$, using the chain rule for the flow,  we have
  \[
    (\varphi_{T-t}^Y)_*X_i\big(\varphi_t^Y(x_0)\big)  = (d\varphi_T^Y)_{x_0}\Big((\varphi_{-t}^Y)_*X_i(x_0)\Big).
  \]
  Indeed,
  \[
    \varphi_T^Y=\varphi_{T-t}^Y\circ\varphi_t^Y,
  \]
  hence
  \[
    (d\varphi_T^Y)_{x_0}  = (d\varphi_{T-t}^Y)_{\varphi_t^Y(x_0)} \circ (d\varphi_t^Y)_{x_0},
  \]
  and therefore
  \[
    (d\varphi_T^Y)_{x_0}\big((d\varphi_{-t}^Y)_{\varphi_t^Y(x_0)}X_i(\varphi_t^Y(x_0))\big) =
    (d\varphi_{T-t}^Y)_{\varphi_t^Y(x_0)}X_i(\varphi_t^Y(x_0)).
  \]
  This is precisely the desired identity. Substituting into the definition of $L_T$, we obtain
  \begin{align}
    L_T(u)
    &=
    \sum_{i=1}^m\int_0^T u_i(t)\,(d\varphi_T^Y)_{x_0}\big((\varphi_{-t}^Y)_*X_i(x_0)\big)\,dt \notag\\
    &=  
    (d\varphi_T^Y)_{x_0}
    \left(  \sum_{i=1}^m\int_0^T  u_i(t)\,(\varphi_{-t}^Y)_*X_i(x_0)\,dt\right).
    \label{eq:LT_factorization}
  \end{align}
  Define
  \[
    S_T:= \operatorname{span} \left\{ (\varphi_{-t}^Y)_*X_i(x_0):\ t\in[0,T],\ i=1,\dots,m  \right\}  \subset T_{x_0}M.
  \]
  Then \eqref{eq:LT_factorization} shows that
  \[
    \operatorname{Im}(L_T)\subset (d\varphi_T^Y)_{x_0}(S_T).
  \]
  Thus, one inclusion in (b) is immediate.

  \medskip
  \noindent
  \textbf{Step 3:}
  We now prove that every generator
  \[
    (d\varphi_T^Y)_{x_0}\big((\varphi_{-t_0}^Y)_*X_j(x_0)\big)
  \]
  belongs to $\operatorname{Im}(L_T)$, for every fixed $t_0\in[0,T]$  and $j\in\{1,\dots,m\}$. Fix such $t_0$ and $j$. For $\varepsilon>0$ small enough such that $t_0+\varepsilon\le T$, define the control  $u^\varepsilon=(u_1^\varepsilon,\dots,u_m^\varepsilon)$ by
  \[
    u_i^\varepsilon(t)=
    \begin{cases}
      \varepsilon^{-1}, & \text{if } i=j \text{ and } t\in[t_0,t_0+\varepsilon],\\[1mm]
      0, & \text{otherwise}.
    \end{cases}
  \]
  Then $u^\varepsilon\in L^1([0,T],\mathbb R^m)$ and, by \eqref{eq:LT_factorization},
  \[
    L_T(u^\varepsilon)  = (d\varphi_T^Y)_{x_0}
    \left(
    \frac1\varepsilon \int_{t_0}^{t_0+\varepsilon}  (\varphi_{-t}^Y)_*X_j(x_0)\,dt  \right).
  \]
  Since the map
  \[
    t\longmapsto (\varphi_{-t}^Y)_*X_j(x_0)\in T_{x_0}M
  \]
  is continuous, it follows that
  \[
    \frac1\varepsilon \int_{t_0}^{t_0+\varepsilon}  (\varphi_{-t}^Y)_*X_j(x_0)\,dt  \longrightarrow (\varphi_{-t_0}^Y)_*X_j(x_0)  \qquad\text{as }\varepsilon\to0^+.
  \]
  Hence
  \[
    L_T(u^\varepsilon)\longrightarrow (d\varphi_T^Y)_{x_0}\big((\varphi_{-t_0}^Y)_*X_j(x_0)\big).
  \]
  Now $\operatorname{Im}(L_T)$ is a linear subspace of the finite-dimensional vector space  $T_{\varphi_T^Y(x_0)}M$; therefore it is closed. Since each $L_T(u^\varepsilon)$ belongs to $\operatorname{Im}(L_T)$, the limit also belongs to $\operatorname{Im}(L_T)$:
  \[
    (d\varphi_T^Y)_{x_0}\big((\varphi_{-t_0}^Y)_*X_j(x_0)\big)\in \operatorname{Im}(L_T).
  \]
  Because $t_0$ and $j$ are arbitrary, we conclude that
  \[
    (d\varphi_T^Y)_{x_0}(S_T)\subset \operatorname{Im}(L_T).
  \]
  Combining this with the opposite inclusion proved in Step 2 gives
  \[
    \operatorname{Im}(L_T)=(d\varphi_T^Y)_{x_0}(S_T),
  \]
  which proves (b).

  \medskip
  \noindent
  \textbf{Step 4:}
  By (a) and (b),
  \[
    R_T(v_0)  = (d\varphi_T^Y)_{x_0}(v_0)+(d\varphi_T^Y)_{x_0}(S_T).
  \]
  Since $(d\varphi_T^Y)_{x_0}:T_{x_0}M\to T_{\varphi_T^Y(x_0)}M$ is a linear isomorphism, the affine set above coincides with the whole fiber $T_{\varphi_T^Y(x_0)}M$ if and only if
  \[
    S_T=T_{x_0}M.
  \]
  By the definition of $S_T$, this means precisely
  \[
    \operatorname{span} \left\{ (\varphi_{-t}^Y)_*X_i(x_0):\ t\in[0,T],\ i=1,\dots,m  \right\}
    = T_{x_0}M.
  \]
  This proves (c).

  \medskip
  \noindent
  \textbf{Step 5:}
  Assume now that
  \begin{equation}\label{eq:ad_condition_strong}
    \operatorname{span}\big\{\operatorname{ad}_Y^kX_i(x_0):k\ge0,\ i=1,\dots,m\big\}=T_{x_0}M.
  \end{equation}
  We prove that then $S_T=T_{x_0}M$.

  Suppose by contradiction that $S_T\neq T_{x_0}M$. Since $S_T$ is a proper subspace of the finite-dimensional space $T_{x_0}M$,  there exists a nonzero covector $\lambda\in T_{x_0}^*M$ such that
  \[
    \lambda(w)=0 \qquad\text{for all } w\in S_T.
  \]
  In particular, for every $i=1,\dots,m$ and every $t\in[0,T]$,
  \[
    \lambda\big((\varphi_{-t}^Y)_*X_i(x_0)\big)=0.
  \]
  For each fixed $i$, define the smooth real-valued function
  \[
    f_i(t):=\lambda\big((\varphi_{-t}^Y)_*X_i(x_0)\big), \qquad t\in[0,T].
  \]
  Then $f_i(t)\equiv0$ on $[0,T]$, hence
  \[
    f_i^{(k)}(0)=0 \qquad\text{for all } k\ge0.
  \]
  On the other hand, differentiating the family
  \[
    Z_i(t):=(\varphi_{-t}^Y)_*X_i
  \]
  with respect to $t$, we obtain
  \[
    \frac{d}{dt}Z_i(t)=-(\varphi_{-t}^Y)_*[Y,X_i].
  \]
  Iterating this identity yields
  \[
    \frac{d^k}{dt^k}Z_i(t)=(-1)^k(\varphi_{-t}^Y)_*\big(\operatorname{ad}_Y^kX_i\big),
  \]
  and therefore, evaluating at $t=0$,
  \[
    \frac{d^k}{dt^k}Z_i(0)=(-1)^k\operatorname{ad}_Y^kX_i.
  \]
  Consequently,
  \[
    f_i^{(k)}(0)  = \lambda\!\left(\frac{d^k}{dt^k}Z_i(0)(x_0)\right)
    = (-1)^k\lambda\big(\operatorname{ad}_Y^kX_i(x_0)\big).
  \]
  Since $f_i^{(k)}(0)=0$, we obtain
  \[
    \lambda\big(\operatorname{ad}_Y^kX_i(x_0)\big)=0
    \qquad\text{for all } k\ge0,\ i=1,\dots,m.
  \]
  Thus $\lambda$ annihilates the subspace
  \[
    \operatorname{span}\big\{\operatorname{ad}_Y^kX_i(x_0):k\ge0,\ i=1,\dots,m\big\}.
  \]
  By \eqref{eq:ad_condition_strong}, this subspace is all of $T_{x_0}M$, so $\lambda$ must vanish on $T_{x_0}M$, contradicting $\lambda\neq0$.  Hence $S_T=T_{x_0}M$. By part (c), the system is controllable along the fiber $T_{\varphi_T^Y(x_0)}M$ at time $T$. This proves (d).
\end{proof}

We present a controllable example on \(TS^2\).

\begin{example}
  Let $M=S^2$ and consider a local spherical chart $(\theta,\varphi)$, with induced coordinates $(\theta,\varphi,y^\theta,y^\varphi)$ on $TS^2$. Take
  \[
    Y=\frac{\partial}{\partial\varphi}, \qquad 
    X_1=\frac{\partial}{\partial\theta}, \qquad 
    X_2=\frac{\partial}{\partial\varphi}.
  \]
  Then the lifted system
  \[
    \dot v(t)=Y^c(v(t))+u_1(t)X_1^v(v(t))+u_2(t)X_2^v(v(t))
  \]
  is of the form \eqref{eq:complete_drift_vertical}. The flow of $Y$ is given by
  \[
    \varphi_t^Y(\theta_0,\varphi_0)=(\theta_0,\varphi_0+t).
  \]
  Since
  \[
    [Y,X_1]=0, \qquad [Y,X_2]=0,
  \]
    the vector fields $X_1$ and $X_2$ are invariant under the flow of $Y$, that is,
  \[
    (\varphi_{-t}^Y)_*X_i(x_0)=X_i(x_0), \qquad \text{for all } t\in[0,T], \ i=1,2.
  \]
  It follows that the subspace $S_T$ introduced in Theorem~\ref{thm:fiber_transport_operator_strong} reduces to
  \[
    S_T = \operatorname{span}\{X_1(x_0),X_2(x_0)\}.
  \]
  Since $\{X_1,X_2\}=\{\partial/\partial\theta,\partial/\partial\varphi\}$ is a basis of $T_{x_0}S^2$, we obtain
  \[
    S_T=T_{x_0}S^2.
  \]
  By Theorem~\ref{thm:fiber_transport_operator_strong}, this implies
  \[
    \operatorname{Im}L_T  = (d\varphi_T^Y)_{x_0}(T_{x_0}S^2)  = T_{\varphi_T^Y(x_0)}S^2.
  \]
  Consequently,
  \[
    \mathcal R_T(v_0) = T_{\varphi_T^Y(x_0)}S^2,
  \]
  and the system is controllable along the fiber at time $T$.

  \medskip

  \noindent
  In this example, controllability follows from the fact that the control directions are invariant along the flow of $Y$, so no new directions are generated by transport. This represents the simplest situation covered by Theorem~\ref{thm:fiber_transport_operator_strong}, where fiberwise controllability reduces to a pointwise rank condition.
\end{example}

\begin{example}
  Let $M=\mathbb R^2$ with global coordinates $(x,y)$, and induced coordinates  $(x,y,v_x,v_y)$ on $T\mathbb R^2$. Consider
  \[
    Y=x\frac{\partial}{\partial y},
    \qquad
    X_1=\frac{\partial}{\partial x}.
  \]
  Then
  \[
    [Y,X_1] = \left[x\frac{\partial}{\partial y},\frac{\partial}{\partial x}\right]
    = -\frac{\partial}{\partial y}\neq 0,
  \]
  so the control direction is not invariant under the flow of $Y$.  The flow of $Y$ is
  \[
    \varphi_t^Y(x_0,y_0)=(x_0,y_0+t\,x_0).
  \]
  Its differential is
  \[
    (d\varphi_t^Y)_{(x_0,y_0)}  =
    \begin{pmatrix}
      1 & 0\\
      t & 1
    \end{pmatrix},
  \]
  hence
  \[
    (\varphi_{-t}^Y)_*X_1(x_0,y_0)  = (d\varphi_{-t}^Y)_{\varphi_t^Y(x_0,y_0)}  
    \left(\frac{\partial}{\partial x}\right)
    =
    \frac{\partial}{\partial x} - t\,\frac{\partial}{\partial y}.
  \]
  Therefore, for every $T>0$,
  \[
    S_T = \operatorname{span}
    \left\{
    (\varphi_{-t}^Y)_*X_1(x_0,y_0):\, t\in[0,T]
    \right\}
    =
    \operatorname{span}
    \left\{
    \frac{\partial}{\partial x}-t\frac{\partial}{\partial y}: t\in[0,T]
    \right\}.
  \]
  Since the family above contains, for instance, the vectors
  \[
    \frac{\partial}{\partial x} \qquad\text{and}\qquad  \frac{\partial}{\partial x}-T\frac{\partial}{\partial y},
  \]
  it follows that
  \[
    S_T=T_{(x_0,y_0)}\mathbb R^2.
  \]
  Hence, by Theorem~\ref{thm:fiber_transport_operator_strong},
  \[
    \operatorname{Im}L_T= (d\varphi_T^Y)_{(x_0,y_0)}\big(T_{(x_0,y_0)}\mathbb R^2\big)
    = T_{\varphi_T^Y(x_0,y_0)}\mathbb R^2,
  \]
  and consequently
  \[
    \mathcal R_T(v_0) = T_{\varphi_T^Y(x_0,y_0)}\mathbb R^2.
  \]
  Thus the lifted system is controllable along the fiber at time $T$ for every $T>0$. In local coordinates on $T\mathbb R^2$, the lifted system is
  \[
    \dot x=0,\qquad
    \dot y=x,\qquad
    \dot v_x=u,\qquad
    \dot v_y=v_x.
  \]
  This example shows that, although there is only one control vector field on the base, its transport along the drift generates a second independent direction. Hence fiberwise controllability is achieved through the interaction between the complete drift and the vertical control.
\end{example}

\begin{remark}
  The previous example shows that fiberwise controllability may hold even when the  original control family $\{X_1,\dots,X_m\}$ does not span the tangent space at the  initial point. What matters is the span of the transported directions $(\varphi_{-t}^Y)_*X_i(x_0)$ along the drift flow.
\end{remark}

\end{document}